\documentclass[a4paper,12pt,leqno]{amsart}
\usepackage{amsmath,amsthm,amssymb,latexsym,epsfig,graphicx,subfigure}
\setlength{\textheight}{22 cm} \setlength{\textwidth}{16 cm}
\setlength{\oddsidemargin}{0.5cm}\setlength{\evensidemargin}{0.5cm} \setlength{\topmargin}{0cm}
\setlength{\headheight}{1cm} \setlength{\marginparwidth}{6.5cm}

\numberwithin{equation}{section}

\newtheorem{thm}{Theorem}[section]

\newtheorem{lm}[thm]{Lemma}

\theoremstyle{definition}

\theoremstyle{definition}
\newtheorem{rem}[thm]{Remark}

\newcommand{\Rn}{\mathbb{R}^{n}}

\newcommand{\R}{\mathbb{R}}

\newcommand {\grtrsim} {\ {\raise-.5ex\hbox{$\buildrel>\over\sim$}}\ }

\newcommand{\khii}{\text{\lower -.4ex\hbox{$\chi$}}}
\DeclareMathOperator{\spt}{spt}

\newcommand{\restrict}{\begin{picture}(12,12)
                       \put(2,0){\line(1,0){8}}
                       \put(2,0){\line(0,1){8}}
                      \end{picture}}

\begin{document}
\title {Hausdorff dimension of intersections with planes and general sets}
\author{Pertti Mattila}

 \subjclass[2000]{Primary 28A75} \keywords{Hausdorff dimension, projection, intersection}

\begin{abstract} 
We give conditions on a general family $P_{\lambda}:\R^n\to\R^m, \lambda \in \Lambda,$ of orthogonal projections  which guarantee that the Hausdorff dimension formula $\dim A\cap P_{\lambda}^{-1}\{u\}=s-m$  holds generically for measurable sets $A\subset\Rn$ with positive and finite $s$-dimensional Hausdorff measure, $s>m$, and with positive lower density. As an application we prove for measurable sets $A,B\subset\Rn$ with positive $s$- and $t$-dimensional measures,  and with positive lower density that if $s + (n-1)t/n > n$, then   $\dim A\cap (g(B)+z) = s+t - n$ for almost all rotations $g$ and  for positively many $z\in\Rn$.
\end{abstract}

\maketitle

\section{introduction}

Let $P_{\lambda}:\R^n\to\R^m, \lambda \in \Lambda,$ be a family of orthogonal projections and suppose that $\Lambda$ is equipped with a measure $\omega$. If this is the full family of orthogonal projections and a Borel set $A\subset\Rn$ has Hausdorff dimension $\dim A > m$, then according to Marstrand's projection theorem, \cite{M}, the Lebesgue measure $\mathcal L^m(A)>0$ for almost all $\lambda$. Kaufman gave a simple proof for this in \cite{Ka} which shows that for any finite Borel measure $\mu$ with finite energy $I_m(\mu)$ the push-forward $P_{\lambda\sharp}\mu$ is absolutely continuous with density in $L^2(\R^m)$ for almost all $\lambda$. Later this method has been applied to many strict subfamilies of projections by several people, see \cite{F1}, \cite{PS}, \cite{O}, and \cite{M8}, and also Chapters 4, 5 and 18 of \cite{M6}. 

Let $A\subset\Rn$ be measurable with respect to the $s$-dimensional Hausdorff measure $\mathcal H^s$ with $0<\mathcal H^s(A)<\infty$. If $s>m$, then for typical $(n-m)$-planes $V$\  $\dim A\cap V=s-m$ due to results originating in \cite{M}, see also \cite{M5}, Chapter 10, and \cite{M6}, Chapter 6. In Section \ref{dimension} we investigate the following question: Suppose we know for some $s>0$ that $P_{\lambda\sharp}\mu\in L^2(\R^m)$ for almost all $\lambda\in \Lambda$  for all in some sense $s$-dimensional measures $\mu$. Can we then conclude that if $A\subset\Rn$ is  $\mathcal H^s$ measurable with $0<\mathcal H^s(A)<\infty$, then $\dim A\cap P_{\lambda}^{-1}\{u\}= s-m$  holds for positively many, in the sense of Lebesgue measure, $u\in\R^m$ and for almost all $\lambda\in\Lambda$? In Theorem \ref{level} we show that this is true if the $L^2$-boundedness holds in a quantitative sense and if $A$ has positive lower density:
\begin{equation}\label{ld}
\liminf_{r\to 0}(2r)^{-s}\mathcal H^s(A\cap B(x,r)) > 0\ \text{for}\ \mathcal H^s\ \text{almost all}\ x\in A. 
\end{equation}

In Section \ref{applications} we apply this to the Hausdorff dimension of intersections.  We prove that if $A\subset\Rn$ is $\mathcal H^s$ measurable    with $0<\mathcal H^s(A)<\infty$,\ $B\subset\Rn$ is $\mathcal H^t$ measurable with $0<\mathcal H^t(B)<\infty$, and both have positive lower density, and if $s+(n-1)t/n>n$, then for almost all orthogonal transformations $g\in O(n)$,\ $\dim A\cap (g(B)+z) = s+t-n$ for positively many $z\in\Rn$. Earlier this  was proved  in \cite{M3} under the conditions $s+t>n, t>(n+1)/2$, and without any lower density assumptions. I believe that both assumptions $t>(n+1)/2$ and $s+(n-1)t/n>n$ are superfluous, and $s+t>n$ should suffice. Under the condition $s+(n-1)t/n>n$ the weaker inequality $\dim A\cap (g(B)+z) \geq s+(n-1)t/n-n$ holds for general measurable sets with positive and finite measure. This follows combining \eqref{dz} with the results of \cite{M4}.

I believe Theorems \ref{level} and \ref{level1} and the inequalities $\dim A\cap (g(B)+z) \geq s+t-n$ in Theorem \ref{inter} should hold without any lower density assumptions, but the method seems to require it. In general, the opposite inequality can fail very badly, see \cite{F2}.

Hausdorff dimension of plane sections has been studied in \cite{M}, \cite{M1}, \cite{Or} and \cite{MO}, and of general intersections in \cite{K}, \cite{M2}, \cite{M3}, \cite{M4}, \cite{M7}, \cite{EIT} and \cite{DF}. They have also been discussed in the books \cite{M5} and \cite{M6}. 

\section{Preliminaries}

We denote by $\mathcal L^n$ the Lebesgue measure in the Euclidean $n$-space $\Rn, n\geq 2,$ and by $\sigma^{n-1}$ the surface measure on the unit sphere $S^{n-1}$. The closed ball with centre $x\in\Rn$ and radius $r>0$ is denoted by $B(x,r)$ or $B^n(x,r)$. We set $\alpha(n)=\mathcal L^n(B^n(0,1))$. The orthogonal group of $\Rn$ is $O(n)$ and its Haar probability measure is $\theta_n$. For $A\subset\Rn$  we denote by $\mathcal M(A)$ the set of non-zero finite Borel measures $\mu$ on $\Rn$ with compact support $\spt\mu\subset A$. The Fourier transform of $\mu$ is defined by
$$\widehat{\mu}(x)=\int e^{-2\pi ix\cdot y}\,d\mu y,~ x\in\Rn.$$
%We shall also use $\mathcal F$ to denote the Fourier transform.

For $0<s<n$ the $s$-energy of $\mu\in\mathcal M(\Rn)$ is 
\begin{equation}\label{eq10}
I_s(\mu)=\iint|x-y|^{-s}\,d\mu x\,d\mu y=c(n,s)\int|\widehat{\mu}(x)|^2|x|^{s-n}\,dx.
\end{equation} 
The second equality is a consequence of Parseval's formula and the fact that the distributional Fourier transform of the Riesz kernel $k_s, k_s(x)=|x|^{-s}$, is a constant multiple of $k_{n-s}$, see, for example, \cite{M5}, Lemma 12.12, or \cite{M6}, Theorem 3.10. These books contain most of the background material needed in this paper.

Notice that if $\mu$ satisfies the Frostman condition $\mu(B(x,r))\leq r^s$ for all $x\in\Rn, r>0$, then $I_t(\mu)<\infty$ for all $t<s$. We have for any Borel set $A\subset\Rn$ with $\dim A > 0$, cf. Theorem 8.9 in \cite{M5},
\begin{equation}\label{eq3}
\begin{split}
\dim A&=\sup\{s:\exists \mu\in\mathcal M(A)\ \text{such that}\ \mu(B(x,r))\leq r^s\ \text{for all}\ x\in\Rn, r>0\}\\
&=\sup\{s:\exists \mu\in\mathcal M(A)\ \text{such that}\ I_s(\mu)<\infty\}.
\end{split}
\end{equation}

We shall denote by $f_{\#}\mu$ the push-forward of a measure $\mu$  under a map  $f: f_{\#}\mu(A)= \mu(f^{-1}(A))$. The restriction of $\mu$ to a set $A$ is defined by $\mu\restrict A(B)=\mu(A\cap B)$. The notation $\ll$ stands for absolute continuity.

The lower and upper $s$-densities of $A\subset\R^n$ are defined by

$$\theta^s_{\ast}(A,x)=\liminf_{r\to 0}(2r)^{-s}\mathcal H^s(A\cap B(x,r)),\ \theta^{\ast s}(A,x)=\limsup_{r\to 0}(2r)^{-s}\mathcal H^s(A\cap B(x,r)).$$ 
If $\mathcal H^s(A)<\infty$, we have by \cite{M5}, Theorem 6.2,
\begin{equation}\label{dens}
\theta^{\ast s}(A,x)\leq 1\ \text{for}\ \mathcal H^s\ \text{almost all}\ x\in A.
\end{equation}

For $\nu\in\mathcal M(\R^m)$ define the derivative at $u\in \R^m$ by

$$D(\nu,u) = \lim_{\delta\to 0} \alpha(m)^{-1}\delta^{-m}\nu (B(u,\delta)),$$
when the limit exists. It does exist and is finite for $\mathcal L^m$ almost all $u\in\R^m$.

The characteristic function of a set $A$ is $\chi_A$. By the notation $M\lesssim N$ we mean that $M\leq CN$ for some constant $C$. The dependence of $C$ should be clear from the context. The notation $M \approx N$ means that $M\lesssim N$ and $N\lesssim M$. By  $c$ we mean positive constants with obvious dependence on the related parameters. 

\section{Dimension of level sets}\label{dimension}

Let $P_{\lambda}:\Rn\to\R^m, \lambda\in\Lambda,$ be orthogonal projections, where $\Lambda$ is a compact metric space.  Suppose that $\lambda\mapsto P_{\lambda}x$ is  continuous for every $x\in\Rn$. Let also $\omega$ be a finite non-zero Borel measure on $\Lambda$. These assumptions are just to guarantee that the measurability of the various functions appearing later can easily be checked and that the forthcoming applications of Fubini's theorem are legitimate.  Much less would suffice, using, for example, the general results of \cite{MM}.

\begin{thm}\label{level}
Let $s>m$. Suppose that $P_{\lambda\sharp}\mu\ll\mathcal L^m$ for $\omega$ almost all $\lambda\in \Lambda$ and that there exists a positive number $C$ such that

\begin{equation}\label{L2}
\iint D(P_{\lambda\sharp}\mu,u)^2\,d\mathcal L^mu\,d\omega\lambda < C
\end{equation}
whenever $\mu\in\mathcal M(B^n(0,1))$ is such that  $\mu(B(x,r))\leq r^s$ for $x\in \Rn, r>0$. 

If $A\subset\R^n$ is $\mathcal H^s$ measurable, $0<\mathcal H^s(A)<\infty$ and $\theta^s_{\ast}(A,x)>0$ for $\mathcal H^s$ almost all $x\in A$, then for $\mathcal H^s\times\omega$ almost all $(x,\lambda)\in A\times\Lambda$,
\begin{equation}\label{e11}
\dim P_{\lambda}^{-1}\{P_{\lambda}x\}\cap A =s-m,
\end{equation}
and for $\omega$ almost all $\lambda\in \Lambda$,
\begin{equation}\label{e12}
\mathcal L^m(\{u\in\R^m: \dim P_{\lambda}^{-1}\{u\}\cap A = s-m\}) > 0.
\end{equation}
\end{thm}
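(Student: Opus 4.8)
The plan is to obtain the lower bound $\dim P_\lambda^{-1}\{u\}\cap A\ge s-m$ by producing, for $\mathcal{L}^m\times\omega$-almost every $(u,\lambda)$, a nonzero measure supported on the slice $A\cap P_\lambda^{-1}\{u\}$ having finite $t$-energy for every $t<s-m$; the matching upper bound is classical. First I would reduce to a normalized situation. Using \eqref{dens} together with the standard density theorems I restrict $A$ to a compact subset $E$ of positive measure on which $\mu:=\mathcal{H}^s\restrict E$ satisfies a two-sided Frostman bound $c\,r^s\le\mu(B(x,r))\le r^s$, the upper estimate coming from $\theta^{\ast s}\le1$ and the lower one from the hypothesis $\theta^s_{\ast}(A,x)>0$; after scaling I may assume $\mu\in\mathcal{M}(B^n(0,1))$, and exhausting $A$ by countably many such $E$ reduces \eqref{e11} and \eqref{e12} to a statement about one such $\mu$. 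For the upper bound I would apply the co-area (Eilenberg) inequality to the $1$-Lipschitz map $P_\lambda$, which gives $\int^{\ast}\mathcal{H}^{s-m}(A\cap P_\lambda^{-1}\{u\})\,d\mathcal{L}^m u\lesssim\mathcal{H}^s(A)<\infty$, hence $\dim A\cap P_\lambda^{-1}\{u\}\le s-m$ for $\mathcal{L}^m$-almost all $u$ and every $\lambda$; since $P_{\lambda\sharp}\mu\ll\mathcal{L}^m$, this transfers to $\mu$-almost every $x$ through $u=P_\lambda x$.

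The heart of the argument is the energy estimate
\[\iint I_t(\mu_{\lambda,u})\,d\mathcal{L}^m u\,d\omega\lambda<\infty\quad\text{for every }0<t<s-m,\]
where $\{\mu_{\lambda,u}\}_u$ is the disintegration of $\mu$ along the fibres of $P_\lambda$, so that $\mu_{\lambda,u}$ is carried by $A\cap P_\lambda^{-1}\{u\}$ and has total mass $D(P_{\lambda\sharp}\mu,u)$. Approximating the slices by the normalized slabs $\alpha(m)^{-1}\delta^{-m}\mu\restrict P_\lambda^{-1}(B(u,\delta))$ and using lower semicontinuity of the energy under weak convergence, I would bound the left-hand side by
\[\liminf_{\delta\to0}\frac{1}{\alpha(m)\delta^m}\iiint_{|P_\lambda(x-y)|\le\delta}|x-y|^{-t}\,d\mu x\,d\mu y\,d\omega\lambda.\]

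I would then split the inner integral dyadically over $|x-y|\approx r=2^{-k}$ and cover $\spt\mu$ by boundedly overlapping balls $B(x_i,r)$. The crucial observation is that the rescaled restriction $r^{-s}\,T_{i\sharp}(\mu\restrict B(x_i,r))$, with $T_i(x)=(x-x_i)/r$, again satisfies the Frostman condition and belongs to $\mathcal{M}(B^n(0,1))$, so the quantitative hypothesis \eqref{L2} applies to it; unwinding the scaling of $D(P_{\lambda\sharp}\,\cdot\,,u)^2$ shows that, after integration in $u$ and $\omega$ and passage to the $\delta$-limit, each ball contributes $\lesssim C\,r^{2s-m}$. Here the lower Frostman bound enters \emph{decisively}: it forces the number of scale-$r$ balls to be $\lesssim r^{-s}$, whence the scale-$r$ term is $\lesssim r^{-t}\cdot r^{-s}\cdot C\,r^{2s-m}=C\,r^{s-m-t}$, and the geometric series $\sum_k 2^{-k(s-m-t)}$ converges precisely because $t<s-m$.

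Finally, from the energy estimate together with $\iint D(P_{\lambda\sharp}\mu,u)\,d\mathcal{L}^m u\,d\omega\lambda=\|\mu\|\,\omega(\Lambda)>0$ I conclude that for $\omega$-almost all $\lambda$ there is a positive-$\mathcal{L}^m$-measure set of $u$ with $\mu_{\lambda,u}\ne0$ and $I_t(\mu_{\lambda,u})<\infty$, so by \eqref{eq3} $\dim A\cap P_\lambda^{-1}\{u\}\ge t$ there; letting $t\uparrow s-m$ and combining with the upper bound gives \eqref{e12}. For \eqref{e11} I would feed the finiteness of $I_t(\mu_{\lambda,u})$ back through the disintegration $\int F\,d\mu=\int\!\big(\int F\,d\mu_{\lambda,u}\big)\,d\mathcal{L}^m u$, which upgrades ``positively many good $u$'' to ``$\mu$-almost every $x$'', yielding the conclusion for $\mathcal{H}^s\times\omega$-almost all $(x,\lambda)$. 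The main obstacle I anticipate is the rigorous justification of the slab-to-slice passage and of the scaling and covering bookkeeping in the energy estimate, in particular verifying that the positive lower density really does bound the number of balls uniformly across all small scales.
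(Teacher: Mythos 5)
Your proposal is correct in outline, and its core quantitative step coincides with the paper's: you rescale $\mu\restrict B(a,r)$ to a Frostman measure in $\mathcal M(B^n(0,1))$, apply the hypothesis \eqref{L2} to it, unwind the scaling to get a per-ball bound of order $r^{2s-m}$ on $\delta^{-m}\iint_{x,y\in B(a,Cr),\,|P_\lambda(x-y)|\le\delta}d\mu\,d\mu\,d\omega$, and use the positive lower density to control the number of scale-$r$ balls by $r^{-s}$; this is exactly the content of the paper's estimate \eqref{e9} and the subsequent covering argument (the paper phrases the count as $2^{-sj}\lesssim b^{-1}\mu(B_{j,i})$ plus bounded overlap, which is the same thing). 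Where you genuinely diverge is the endgame. You convert the summable-in-$r$ bound into the energy estimate $\iint I_t(\mu_{\lambda,u})\,d\mathcal L^mu\,d\omega\lambda<\infty$ for the sliced measures and then invoke \eqref{eq3}, i.e.\ the classical route of \cite{M5}, Chapter 10 (and \cite{M6}, Chapter 6) for the full family of planes, transplanted to a general family. The paper instead avoids sliced measures entirely: it shows the tube-counting function $r^{-t}\delta^{-m}\mu(\{y\in B(x,r):|P_\lambda(y-x)|\le\delta\})$ tends to $0$ for a.e.\ $(x,\lambda)$ and then applies a Marstrand-type lemma (Lemma \ref{marstrand}): if the fiber through $x$ had $\mathcal H^t$-measure zero, that quantity would have to blow up. The paper's route gives the slightly stronger conclusion $\mathcal H^t(P_\lambda^{-1}\{P_\lambda x\}\cap A)>0$ for $t<s-m$ and needs only elementary covering arguments; your route leans on the existence and fiber-support properties of sliced measures and lower semicontinuity of energy under weak convergence, but these are available off the shelf for each fixed $\lambda$ and buy you finite-energy measures on the fibers, which is more information than a dimension bound. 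Your passage from ``positively many good $u$'' to ``$\mu$-a.e.\ $x$'' via the disintegration identity is also sound and plays the role of the paper's Fubini step.

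Two details deserve care but are not gaps. First, your claimed reduction to a single compact $E$ with the two-sided bound $cr^s\le\mathcal H^s(E\cap B(x,r))\le r^s$ is slightly stronger than what the density theorems hand you directly; it is cleaner (and sufficient) to do as the paper does, keeping the upper Frostman bound for the ambient restricted measure $\mu$ and separately choosing a compact $B\subset\spt\mu$ with $\mu(A\setminus B)$ small and $\mu(B(a,r))\ge br^s$ for $a\in B$, $0<r<r_0$ (the ambient $\mu$-measure of full balls is what the ball count actually uses). Second, when you apply \eqref{L2} to the rescaled measures you should pass through $\epsilon^{-2m}\int P_{\lambda\sharp}\mu_{a,r}(B(u,\epsilon))^2\,du\le\alpha(m)^2\int D(P_{\lambda\sharp}\mu_{a,r},u)^2\,du$, valid for a.e.\ $\lambda$ by the absolute continuity hypothesis; the paper smooths $\mu$ to $\mu^{\delta}$ precisely to make this step painless, and you may wish to do the same.
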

\begin{proof}
Note first that using \eqref{dens} our assumptions imply that $P_{\lambda\sharp}(\mathcal H^s\restrict A)\ll\mathcal L^m$ for $\omega$ almost all $\lambda\in\Lambda$.

For any $\lambda\in \Lambda$ the inequality $\dim P_{\lambda}^{-1}\{u\}\cap A \leq s-m$ for $\mathcal L^m$ almost all $u\in\R^m$ follows for example from \cite{M5}, Theorem 7.7. This implies $\dim P_{\lambda}^{-1}\{P_{\lambda}x\}\cap A \leq s-m$ for $\mathcal H^s$ almost all $x\in A$ whenever $P_{\lambda\sharp}(\mathcal H^s\restrict A)\ll\mathcal L^m$. Hence we only need to prove the opposite inequalities. 

Define $\mu = 10^{-s}\mathcal H^s \restrict A$. Due to \eqref{dens} we may assume that $\mu(B(x,r))\leq (r/2)^s$ for $x\in \Rn, r>0$, by restricting $\mu$ to a suitable subset of $A$ with large measure; the positive lower density property is inherited by subsets by Corollary 6.3 in \cite{M5}. We may also assume that $A$ is compact, which makes it easier to verify the measurabilities. 

For $\delta>0$ define $\mu^{\delta}\in\mathcal M(\Rn)$ by $\mu^{\delta}(B)=\alpha(n)^{-1}\delta^{-n}\int_B\mu(B(x,\delta))\,dx.$
For $a,x\in\Rn, r>0,$ define $T_{a,r}(x)=(x-a)/r$ and let 
$\mu_{a,r}=r^{-s}T_{a,r\sharp}(\mu\restrict B(a,r))\in\mathcal M(B(0,1))$. Then one easily checks that  $(\mu^{\delta})_{a,r}(B(x,\rho))\leq \rho^s$ for $x\in\Rn$ and $\rho>0$. Hence for all $a\in\Rn, r>0, \delta>0$,
\begin{equation}\label{e4}
\iint D(P_{\lambda\sharp}(\mu^{\delta})_{a,r},u)^2\,d\mathcal L^mu\,d\omega\lambda < C.
\end{equation}

Let $ 0<\epsilon < \delta < r < 1$. In the following estimate observe that for any $a, x\in\Rn, u\in\R^m$, if $|P_{\lambda}(x-a)-ru|\leq \delta$, then
$$\mathcal L^n(\{y\in\Rn: |x-y|\leq 2\delta, |P_{\lambda}(y-a)-ru|\leq \epsilon\})\approx \delta^{n-m}\epsilon^{m}.$$
We obtain
\begin{align*}
&\mu(\{x\in B(a,2r): |P_{\lambda}(x-a)-ru|\leq \delta\})\\
&\approx \delta^{m-n}\epsilon^{-m}\int_{\{x\in B(a,2r): |P_{\lambda}(x-a)-ru|\leq \delta\}}\mathcal L^n(\{y\in\Rn: |x-y|\leq 2\delta, |P_{\lambda}(y-a)-ru|\leq \epsilon\})\,d\mu x\\
&\leq \delta^{m-n}\epsilon^{-m}\int_{\{y\in B(a,4r): |P_{\lambda}(y-a)-ru|\leq \epsilon\}}\mu (B(y,2\delta))\,d \mathcal L^ny\\
&= 2^n\alpha(n)\delta^{m}\epsilon^{-m}\int_{\{y\in B(a,4r): |P_{\lambda}((y-a)/4r)-u/4|\leq \epsilon/4r\}}\,d\mu^{2\delta}y\\
&=2^n\alpha(n)(4r)^s\delta^{m}\epsilon^{-m}P_{\lambda\sharp}(\mu^{2\delta})_{a,4r}(B(u/4,\epsilon/4r)).
\end{align*} 
Thus
\begin{align*}
&\mu(\{x\in B(a,2r): |P_{\lambda}(x-a)-ru|\leq \delta\})\\
&\lesssim r^{s-m}\delta^{m}\liminf_{\epsilon\to 0}(\epsilon/4r)^{-m}P_{\lambda\sharp}(\mu^{2\delta})_{a,4r}(B(u/4,\epsilon/4r))\\
&=\alpha(m)r^{s-m}\delta^{m}D(P_{\lambda\sharp}(\mu^{2\delta})_{a,4r},u/4),
\end{align*}
if the limit exists.
Therefore
\begin{equation}
\begin{split}\label{e9}
C4^m&> \iint D(P_{\lambda\sharp}(\mu^{2\delta})_{a,4r},u/4)^2\,d\mathcal L^mu\,d\omega\lambda\\
&\gtrsim r^{2m-2s}\delta^{-2m}\iint (\mu(\{x\in B(a,2r):|P_{\lambda}(x-a)-ru|\leq \delta\})^2\,d\mathcal L^mu\,d\omega\lambda\\
&= r^{2m-2s}\delta^{-2m}\iint_{B(a,2r)}\int_{B(a,2r)}\\ &\mathcal L^m(\{u:|P_{\lambda}(x-a)-ru|\leq \delta, |P_{\lambda}(y-a)-ru|\leq \delta\})\,d\mu x\,d\mu y\,d\omega\lambda\\
%&= r^{2m-2s}\delta^{-2m}\iint_{B(a,2r)}\int_{B(a,2r)}\mathcal L^m(B(P_{\lambda}((x-a)/r,\delta/r)\cap B(P_{\lambda}((y-a)/r,\delta/r))\,d\mu x\,d\mu y\,d\omega\lambda\\
&\gtrsim r^{m-2s}\delta^{-m}\iint_{B(a,2r)} \mu(\{y\in B(a,2r):|P_{\lambda}(y-x)|\leq\delta\})\,d\mu x\,d\omega\lambda\\
&\geq r^{m-2s}r^{t}\iint_{B(a,r)} r^{-t}\delta^{-m} \mu(\{y\in B(x,r):|P_{\lambda}(y-x)|\leq\delta\})\,d\mu x\,d\omega\lambda\\
&= r^{-\eta-s}\iint_{B(a,r)} r^{-t}\delta^{-m} \mu(\{y\in B(x,r):|P_{\lambda}(y-x)|\leq\delta\})\,d\mu x\,d\omega\lambda,\\
\end{split}
\end{equation}
where $0<t<s-m$ and $\eta=s-m-t>0$.

Next we want to show  that for $\omega$ almost all $\lambda\in \Lambda$ and $\mu$ almost all $x\in A$, 
\begin{equation}\label{e8}
\lim_{r\to 0}\liminf_{\delta\to 0}r^{-t}\delta^{-m} \mu(\{y\in B(x,r):|P_{\lambda}(y-x)|\leq\delta\}) = 0.
\end{equation}
Let $B\subset A$ be compact and $b$ and $r_0$ positive numbers such that $\mu(B(a,r))\geq br^s$ for $a\in B$ and $0<r<r_0$. By the assumption on positive lower density we can find them so that $\mu(A\setminus B)$ is arbitrarily small, whence it is enough to show \eqref{e8} for $\mu$ almost all $x\in B$.

For $j=j_0,j_0+1,\dots$, with $2^{-j_0}<r_0$ choose $a_{j,i}\in B, i=1,\dots,k_j,$ such that $B\subset\cup_iB(a_{j,i},2^{-j})$ and the balls $B_{j,i}:=B(a_{j,i},2^{-j}), i=1,\dots,k_j,$ have bounded overlap. Let $0<\delta<2^{-j}$ and set

$$f_{j}(x,\lambda,\delta)=2^{jt}\delta^{-m} \mu(\{y\in B(x,2^{-j}):|P_{\lambda}(y-x)|\leq\delta\}),$$
and
$$f_{j}(x,\lambda)=\liminf_{\delta\to 0}f_{j}(x,\lambda,\delta).$$
Then by \eqref{e9}
$$\iint_{B_{j,i}} f_{j}(x,\lambda,\delta)\,d\mu x\,d\omega\lambda \lesssim 2^{-\eta j-sj}\leq 2^{-\eta j}b^{-1}\mu(B_{j,i}).$$
By the bounded overlap,
$$\iint_{B} f_{j}(x,\lambda,\delta)\,d\mu x\,d\omega\lambda \lesssim  2^{-\eta j}b^{-1}\mu(A).$$
Hence by Fatou's lemma,
$$\iint_{B} f_{j}(x,\lambda)\,d\mu x\,d\omega\lambda \lesssim  2^{-\eta j}b^{-1}\mu(A),$$
whence
$$\iint_{B}\sum_{j\geq j_0} f_{j}(x,\lambda)\,d\mu x\,d\omega\lambda < \infty.$$
Recalling the definition of $f_j$ we have for $\omega$ almost all $\lambda\in \Lambda$ and $\mu$ almost all $x\in B$, 
$$\lim_{j\to\infty}\liminf_{\delta\to 0}2^{jt}\delta^{-m} \mu(\{y\in B(x,2^{-j}):|P_{\lambda}(y-x)|\leq\delta\})=0. $$
This implies \eqref{e8}.

To finish the proof set for $\lambda\in\Lambda$,

$$E_{\lambda}=\{x\in A:\mathcal H^t(P_{\lambda}^{-1}\{P_{\lambda}x\}\cap A)=0\}.$$ 
Then by Lemma \ref{marstrand} below
$$\limsup_{r\to 0}\liminf_{\delta\to 0}r^{-t}\delta^{-m} \mu(\{y\in B(x,r):|P_{\lambda}(y-x)|\leq\delta\}) = \infty$$
for $\mu$ almost all $x\in E_{\lambda}$. On the other hand, by \eqref{e8} for $\omega$ almost all $\lambda\in \Lambda$ and $\mu$ almost all $x\in A$,
$$\limsup_{r\to 0}\liminf_{\delta\to 0}r^{-t}\delta^{-m} \mu(\{y\in B(x,r):|P_{\lambda}(y-x)|\leq\delta\}) = 0.$$
Hence $\mu(E_{\lambda})=0$ for $\omega$ almost all $\lambda\in \Lambda$. It follows from Fubini's theorem 
that $\mu\times\omega(\{(x,\lambda): x\in E_{\lambda}\})=0,$ and so 
$\dim P_{\lambda}^{-1}\{P_{\lambda}x\}\cap A \geq t$ for $\mu\times\omega$ 
almost all $(x,\lambda)\in A\times\Lambda$. Now \eqref{e11} follows by the arbitrariness of $t<s-m$ and then \eqref{e12} follows from the absolute continuity.
\end{proof}

\begin{lm}\label{marstrand}
Let  $t>0$. Suppose that $E\subset\Rn$ is a Borel set  
and $P:\Rn\to\R^m$ is an orthogonal projection. If 
$\mathcal H^t(E\cap P^{-1}\{u\})=0$ for all $u\in\R^m$, then for any $\mu\in\mathcal M(E)$,
$$\limsup_{r\to 0}\liminf_{\delta\to 0}r^{-t}\delta^{-m} \mu(\{y\in E\cap B(x,r):|P(y-x)|\leq\delta\}) = \infty$$
for $\mu$ almost all $x\in E$. 
\end{lm}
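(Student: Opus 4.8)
The plan is to argue directly, not by contradiction, by slicing $\mu$ along the fibres of $P$ and comparing two densities: the $t$-density of the slice measures inside a fibre and the $m$-density of the push-forward $\nu=P_\sharp\mu$ in the base $\R^m$. The hypothesis $\mathcal H^t(E\cap P^{-1}\{u\})=0$ forces the slices to have \emph{infinite} upper $t$-density almost everywhere, and transporting this blow-up through the differentiation relation between $\mu$, $\nu$ and the slices yields the asserted $\limsup_{r}\liminf_{\delta}=\infty$.

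First I would disintegrate. Since $\spt\mu\subset E$ is compact and $P$ is linear, set $\nu=P_\sharp\mu$ and write $\mu=\int\mu_u\,d\nu u$, where for $\nu$ almost all $u$ the probability measure $\mu_u$ is carried by $E\cap P^{-1}\{u\}$; I would realise the $\mu_u$ by Besicovitch differentiation with respect to $\nu$, so that for $\nu$ almost all $u$,
$$\mu_u(\overline{B(z,\rho)})=\lim_{\delta\to0}\frac{\mu(B(z,\rho)\cap P^{-1}(\overline{B(u,\delta)}))}{\nu(\overline{B(u,\delta)})}$$
for the relevant $z,\rho$. Fix such a $u$. Because $\mu_u\in\mathcal M(E\cap P^{-1}\{u\})$ and $\mathcal H^t(E\cap P^{-1}\{u\})=0$, the density theorem (\cite{M5}, Theorem 6.9) applied on this null set gives $\limsup_{\rho\to0}\rho^{-t}\mu_u(B(z,\rho))=\infty$ for $\mu_u$ almost all $z$: if the upper $t$-density of $\mu_u$ were finite on a set of positive $\mu_u$-measure, that set would carry positive $\mathcal H^t$ measure.

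The decisive step is to feed this back into the quantity in the statement. With $u=Pz$ I factor
$$\frac{\mu(B(z,\rho)\cap P^{-1}(\overline{B(u,\delta)}))}{\delta^m}=\frac{\nu(\overline{B(u,\delta)})}{\delta^m}\cdot\frac{\mu(B(z,\rho)\cap P^{-1}(\overline{B(u,\delta)}))}{\nu(\overline{B(u,\delta)})}.$$
The second factor tends to $\mu_u(\overline{B(z,\rho)})\ge\mu_u(B(z,\rho))>0$ for $\mu_u$ almost all $z$ (such $z$ lie in $\spt\mu_u$), while the $\liminf_{\delta}$ of the first factor equals $\alpha(m)$ times the lower $m$-density of $\nu$ at $u$. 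A standard measure-theoretic fact — $\nu$ almost every $u$ has positive lower $m$-density, since on the absolutely continuous part the density is positive $\nu$-almost everywhere and on the singular part it equals $+\infty$ for $\nu$ almost all $u$ (\cite{M5}, Ch.~2) — then makes this $\liminf$ a positive multiple of $\mu_u(B(z,\rho))$. Hence, for $\nu$ almost all $u$ and $\mu_u$ almost all $z$,
$$\liminf_{\delta\to0}\frac{\mu(B(z,\rho)\cap P^{-1}(\overline{B(Pz,\delta)}))}{\delta^m}\gtrsim \mu_u(B(z,\rho)),$$
with an implied constant positive for the given $u$. Multiplying by $\rho^{-t}$ and taking $\limsup_{\rho\to0}$ along a sequence realising the infinite upper $t$-density of $\mu_u$, the right-hand side blows up, hence so does the left-hand side. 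Fubini's theorem converts ``for $\nu$ a.e.\ $u$ and $\mu_u$ a.e.\ $z$'' into ``for $\mu$ a.e.\ $x$,'' which is exactly the assertion of the lemma.

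The main obstacle is the differentiation identity for the slices, precisely the fact that the Besicovitch limit defining $\mu_u(\overline{B(z,\rho)})$ holds \emph{simultaneously} for $\mu_u$ almost all $z$ and $\nu$ almost all $u$, rather than only for each fixed pair $(z,\rho)$: the integrand $u'\mapsto\mu_{u'}(B(z,\rho))$ depends on the base point $z$, while we differentiate exactly at $u=Pz$, so the routine ``fixed function, a.e.\ Lebesgue point'' statement does not apply verbatim. I would resolve this by using the joint measurability of $(z,u)\mapsto\mu_u(B(z,\rho))$ supplied by the disintegration, restricting $\rho$ to a countable sequence tending to $0$ and avoiding the at most countably many radii with $\mu_u(\partial B(z,\rho))>0$, and then applying Fubini together with the classical differentiation theorem to a single auxiliary function on $\Rn$, so that all the exceptional sets amalgamate into one $\mu$-null set.
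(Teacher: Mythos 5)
Your proposal is correct in strategy and reaches the conclusion, but it takes a genuinely different route from the paper. The paper argues by contradiction and never disintegrates: it assumes the $\liminf_{\delta}$ is bounded by $C$ for all $x$ in a compact $F\subset E$ of positive measure and all $r<r_0$, upgrades this pointwise bound to a Frostman-type estimate $\mu(F\cap B(x_0,r)\cap P^{-1}(B(Px_0,\delta)))<(2r)^tC\delta^m$ via differentiation of the projected measure, and then plays two covering arguments against each other -- a Vitali-type lower bound $\mu(F\cap P^{-1}(B(u,\delta)))\geq c\delta^m\mu(F)$ at a well-chosen $u$, versus a cover of the $\mathcal H^t$-null fibre $F\cap P^{-1}\{u\}$ by balls with $\sum r_i^t<\epsilon$ -- to force $\mu(F)=0$. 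Your argument instead disintegrates $\mu$ over $\nu=P_\sharp\mu$ and locates the blow-up structurally: the slices $\mu_u$ sit on $\mathcal H^t$-null fibres and hence have infinite upper $t$-density (this deduction via the density theorem is correct), the base measure $\nu$ has positive lower $m$-density $\nu$-a.e.\ (your a.c./singular decomposition argument is also correct), and the product of the two drives the double limit to infinity. What the paper's route buys is economy: no disintegration, no joint-measurability or diagonal-differentiation issues, everything reduced to two covering lemmas. What your route buys is transparency about \emph{why} the quantity blows up, and it localises all the delicacy into the single differentiation identity you flag.

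On that flagged point: you have correctly identified the real obstacle (the set being differentiated, $B(z,\rho)$, moves with the point $u=Pz$ at which you differentiate), but your proposed repair emphasises countable radii and a ``single auxiliary function,'' whereas the dependence that actually causes trouble is on the \emph{centre} $z$. The clean fix exploits that you only need a lower bound: for each centre $z_0$ in a countable dense subset of $\Rn$ and each rational $\rho'$ the Besicovitch limit holds at $u=Px$ for $\mu$-a.e.\ $x$ (the exceptional set pulls back to a $\mu$-null set since $\mu(P^{-1}(N))=\nu(N)=0$), and then for general $x,\rho$ one bounds $\liminf_{\delta}\mu(B(x,\rho)\cap P^{-1}(B(Px,\delta)))/\nu(B(Px,\delta))$ from below by $\mu_{Px}(B(z_0,\rho'))$ over all $B(z_0,\rho')\subset B(x,\rho)$ and takes the supremum. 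With that adjustment the proof closes; the final Fubini step converting ``$\nu$-a.e.\ $u$, $\mu_u$-a.e.\ $z$'' into ``$\mu$-a.e.\ $x$'' is exactly the disintegration identity and is fine.
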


Essentially this was proved by Marstrand in \cite{M}, Lemma 16, in the plane. The same proof works here, but I give a partially different argument.

\begin{proof}
Let $F\subset E$ be compact such that for some positive numbers $r_0$ and $C$ we have for $x\in F$ and $0<r<r_0$,
\begin{equation}\label{e2}
\liminf_{\delta\to 0}r^{-t}\delta^{-m} \mu(\{y\in F\cap B(x,r):|P(y-x)|\leq\delta\}) < C.
\end{equation}
It suffices to show that $\mu(F)=0$. 

For fixed $x_0\in F, 0<r<r_0/2$, define $\nu=P_{\sharp}(\mu\restrict F\cap B(x_0,r))\in\mathcal M(P(F\cap B(x_0,r)))$. If $u\in \spt\nu$ then $u=Px$ for some $x\in F\cap B(x_0,r)$. By \eqref{e2},
$$\liminf_{\delta\to 0}\delta^{-m}\nu(B(u,\delta))\leq \liminf_{\delta\to 0}\delta^{-m}\mu(\{y\in B(x,2r):|P(y-x)|\leq\delta\}) <(2r)^t C.$$
This implies that $\nu\ll\mathcal L^m$ and for $\delta>0$,
$$\nu(B(Px_0,\delta))=\int_{B(Px_0,\delta)} D(\nu,u)\,d\mathcal L^mu < (2r)^tC\delta^m,$$
whence
\begin{equation}\label{mar}
\mu(\{y\in F\cap B(x_0,r):|P(y-x_0)|\leq\delta\}) < (2r)^tC\delta^m.
\end{equation}
 
We can find a point $u\in\R^m$ and  $c, \delta_u>0$ such that 
\begin{equation}\label{e10}
\mu(F\cap P^{-1}(B(u,\delta))\geq c\delta^m\mu(F)
\end{equation}  for $0<\delta<\delta_u$. This follows by an easy application of Vitali's covering theorem in $\R^m$. 

Let $\epsilon>0$ and $V= P^{-1}\{u\}.$ As $\mathcal H^t(F\cap V)=0$ and $F\cap V$ is compact, there are balls $B_i=B(x_i,r_i), x_i\in F\cap V, i=1,\dots,k,$ with the balls $B(x_i,r_i/2)$ covering $F\cap V$, such that $r_i<r_0$ and $\sum_{i=1}^kr_i^t<\epsilon$. Notice that unless $\mu(F)=0$, $F\cap V\not=\emptyset$ by \eqref{e10}. For sufficiently small $\delta>0$,
\begin{equation}\label{e15}
F\cap P^{-1}(B(u,\delta)) \subset \bigcup_{i=1}^kF\cap B(x_i,r_i)\cap P^{-1}(B(u,\delta)),
\end{equation}
and by \eqref{mar},
\begin{equation}\label{e3}
\delta^{-m} \mu(F\cap B(x_i,r_i)\cap P^{-1}(B(u,\delta))) < (2r_i)^{t}C.
\end{equation}
Putting together \eqref{e10}, \eqref{e15} and \eqref{e3}, we obtain
$$c\mu(F)\leq 2^tC\sum_{i=1}^kr_i^t<2^tC\epsilon,$$ 
from which the lemma follows.
\end{proof}

\begin{rem}
If $P:\Rn\to\R^m$ is an orthogonal projection, we can take in Theorem \ref{level} $\Lambda = \{P\}$ and $\omega$ the point mass to get a result for an individual projection. However I don't know of any case where this could be useful.
\end{rem}

For an application to intersections we shall need the following product set version of Theorem \ref{level}. There $P_{\lambda}:\Rn\times\R^p\to\R^m, \lambda\in\Lambda,$ are orthogonal projections with the same assumptions as before.

\begin{thm}\label{level1}
Let $s,t>0$ with $s+t>m$. Suppose that $P_{\lambda\sharp}(\mu\times\nu)\ll\mathcal L^m$ for $\omega$ almost all $\lambda\in \Lambda$ and there exists a positive number $C$ such that

\begin{equation}\label{L2}
\iint D(P_{\lambda\sharp}(\mu\times\nu),u)^2\,d\mathcal L^mu\,d\omega\lambda < C
\end{equation}
whenever $\mu\in\mathcal M(B^n(0,1)), \nu\in\mathcal M(B^p(0,1))$ are such that $\mu(B(x,r))\leq r^s$ for $x\in \Rn, r>0$, and $\nu(B(y,r))\leq r^t$ for $y\in \R^p, r>0$. 

If $A\subset\R^n$ is $\mathcal H^s$ measurable, $0<\mathcal H^s(A)<\infty$,\ $B\subset\R^p$ is $\mathcal H^t$ measurable, $0<\mathcal H^t(B)<\infty$, $\theta^s_{\ast}(A,x)>0$ for $\mathcal H^s$ almost all $x\in A$, and $\theta^t_{\ast}(B,y)>0$ for $\mathcal H^t$ almost all $y\in B$, then for $\mathcal H^s\times \mathcal H^t\times\omega$ almost all $(x,y,\lambda)\in A\times B\times\Lambda$,
\begin{equation}\label{pr1}
\dim P_{\lambda}^{-1}\{P_{\lambda}(x,y)\}\cap (A\times B) =s+t-m,
\end{equation}
and for $\omega$ almost all $\lambda\in \Lambda$,
\begin{equation}\label{pr2}
\mathcal L^m(\{u\in\R^m: \dim P_{\lambda}^{-1}\{u\}\cap (A\times B) = s+t-m\}) > 0.
\end{equation}
\end{thm}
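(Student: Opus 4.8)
The plan is to run the proof of Theorem \ref{level} essentially verbatim, working in the product space $\R^{n}\times\R^{p}=\R^{n+p}$ with the single measure $\mu\times\nu$ in place of $\mu$ and with the exponent $s+t$ in place of $s$; the hypothesis $s+t>m$ then plays the role of $s>m$. First I would set $\mu=c_s\mathcal H^s\restrict A$ and $\nu=c_t\mathcal H^t\restrict B$ and, using \eqref{dens} on each factor and restricting to subsets of large measure, arrange $\mu(B(x,r))\le(r/2)^s$ and $\nu(B(y,r))\le(r/2)^t$. Since $B(w,\rho)\subset B(x,\rho)\times B(y,\rho)$ for $w=(x,y)$, the product then satisfies the Frostman bound $(\mu\times\nu)(B(w,\rho))\le(\rho/2)^{s+t}$, so $\mu\times\nu$ is an admissible competitor for \eqref{L2}. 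The inequality $\le$ in \eqref{pr1} follows exactly as in Theorem \ref{level}, from $P_{\lambda\sharp}(\mu\times\nu)\ll\mathcal L^m$ together with the slicing results of \cite{M5} applied in $\R^{n+p}$, so the real work is the lower bound $\dim P_\lambda^{-1}\{P_\lambda(x,y)\}\cap(A\times B)\ge\tau$ for every $\tau<s+t-m$.

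Two points require genuine care beyond a mechanical translation. The first is the lower-density input. The argument for the analogue of \eqref{e8} needs a large compact set on which the driving measure satisfies a matching lower bound $(\mu\times\nu)(B(w,\rho))\ge b\rho^{s+t}$. This I would obtain from the individual lower densities: by \eqref{ld} there are compact $A'\subset A$, $B'\subset B$ and constants $b_1,b_2,r_0>0$ with $\mu(B(x,\rho))\ge b_1\rho^s$ on $A'$ and $\nu(B(y,\rho))\ge b_2\rho^t$ on $B'$, with $\mu(A\setminus A')$ and $\nu(B\setminus B')$ as small as we like. Since $B(x,\rho/\sqrt2)\times B(y,\rho/\sqrt2)\subset B(w,\rho)$, on the compact product $A'\times B'$ we get $(\mu\times\nu)(B(w,\rho))\ge b_1b_2\,2^{-(s+t)/2}\rho^{s+t}$, while $(\mu\times\nu)((A\times B)\setminus(A'\times B'))\le\mu(A\setminus A')\nu(B)+\mu(A)\nu(B\setminus B')$ stays small; this is exactly what the telescoping sum estimate requires.

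The second and main obstacle is preserving the product structure when feeding measures into \eqref{L2}, which in Theorem \ref{level1} is assumed only for product measures $\mu'\times\nu'$. In the chain \eqref{e9} one applies the $L^2$ bound to the averaged, restricted and rescaled measure $(\mu^{2\delta})_{a,4r}$, and neither ball-averaging nor restriction to a ball $B(a,4r)$ respects products. I would fix this by carrying out both operations factorwise: replace the ambient averaging by $\mu^{2\delta}\times\nu^{2\delta}$ (each factor averaged in its own space) and replace the ball $B(a,4r)$ by the box $B(a_1,4r)\times B(a_2,4r)$, with $a=(a_1,a_2)$. Because the dilation $T_{a,4r}$ splits as $T_{a_1,4r}\times T_{a_2,4r}$, the resulting normalized measure factors as $\mu'\times\nu'$ with $\mu'$ Frostman-$s$ and $\nu'$ Frostman-$t$ supported in unit balls, so \eqref{L2} applies directly; and since balls and boxes in $\R^{n+p}$ are comparable up to dimensional constants, every $\lesssim$ and $\gtrsim$ in \eqref{e9} survives with only the implicit constants changed. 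With these two modifications the remainder is a faithful copy of the scalar proof: one chooses $0<\tau<s+t-m$ (renaming the auxiliary exponent, since $t$ is now taken), runs the $f_j$ telescoping argument to obtain the analogue of \eqref{e8}, and then invokes Lemma \ref{marstrand} with $E=A\times B\subset\R^{n+p}$ and $P=P_\lambda$ to conclude that $\mu\times\nu(E_\lambda)=0$ for $\omega$-almost all $\lambda$, giving \eqref{pr1} and then \eqref{pr2} by absolute continuity. I expect the factorwise rescaling bookkeeping to be the only real subtlety.
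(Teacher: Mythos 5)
Your proposal is correct and follows the paper's approach exactly: the paper's own proof of Theorem \ref{level1} is literally ``apply the same argument to $\mu\times\nu=(\mathcal H^s\restrict A)\times(\mathcal H^t\restrict B)$,'' and your two refinements --- deriving the Frostman and lower-density bounds for $\mu\times\nu$ from those of the factors, and performing the averaging, restriction and rescaling factorwise so that the measure fed into the $L^2$ hypothesis remains a product --- are precisely the details the paper leaves implicit. The one place you are quicker than the paper is the upper bound, where the paper does not invoke finiteness of $\mathcal H^{s+t}(A\times B)$ (which is not automatic) but instead uses the inequality $\dim P_{\lambda}^{-1}\{u\}\cap(A\times B)\leq\dim(A\times B)-m$ together with the fact that the positive lower densities force $\dim(A\times B)=s+t$ (\cite{M5}, Theorems 7.7 and 6.13 and Corollary 8.11).
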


\begin{proof}
The proof is essentially the same as that of Theorem \ref{level}. We now have the inequality $\dim P_{\lambda}^{-1}\{u\}\cap (A\times B) \leq \dim A\times B-m$ for almost all $u\in\R^m$ by \cite{M5}, Theorem 7.7, and by \cite{M5},  Theorem 6.13 and Corollary 8.11, the positive lower densities imply $\dim A\times B = s+t$. The corresponding inequality for \eqref{pr1} follows from absolute continuity. So we again only need to prove the opposite inequalities.  For them we just apply the same argument to $\mu\times\nu=(\mathcal H^s\restrict A)\times(\mathcal H^t\restrict B)$ in place of $\mu=\mathcal H^s\restrict A$.
\end{proof}

\section{Intersections}\label{applications}

We now apply Theorem \ref{level1} to the Hausdorff dimension of intersections.

\begin{thm}\label{inter}
Let $s,t>0$ with $s+(n-1)t/n>n$ and let $A\subset\R^n$ be $\mathcal H^s$ measurable, $0<\mathcal H^s(A)<\infty$, and let $B\subset\R^n$ be $\mathcal H^t$ measurable, $0<\mathcal H^t(B)<\infty$,\ $\theta^s_{\ast}(A,x)>0$ for $\mathcal H^s$ almost all $x\in A$, and $\theta^t_{\ast}(B,y)>0$ for $\mathcal H^t$ almost all $y\in B$. Then for $\mathcal H^s\times\mathcal H^t\times\theta_n$ almost all $(x,y,g)\in A\times B\times O(n)$,
\begin{equation}\label{e5}
\dim  A\cap (g(B-y)+x) = s+t-n,
\end{equation}
and for $\theta_n$ almost all $g\in O(n)$,
\begin{equation}\label{e6}
\mathcal L^n(\{z\in\R^n:\dim A\cap (g(B)+z) = s+t-n\}) > 0.
\end{equation}
\end{thm}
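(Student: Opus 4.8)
The plan is to derive Theorem \ref{inter} from Theorem \ref{level1} by exhibiting the intersections $A\cap(g(B)+z)$ as level sets of a family of orthogonal projections of the product $A\times B\subset\R^n\times\R^n$. For $g\in O(n)$ I would define the linear map $P_g:\R^n\times\R^n\to\R^n$ by $P_g(x,y)=\tfrac1{\sqrt2}\bigl(x-g(y)\bigr)$. Its adjoint is $P_g^\ast z=\tfrac1{\sqrt2}(z,-g^{-1}z)$, so $P_gP_g^\ast=\mathrm{id}_{\R^n}$ and $P_g$ is an orthogonal projection onto an $n$-plane. Since $g\mapsto P_g(x,y)$ is continuous and $O(n)$ is a compact metric space carrying the Haar probability measure $\theta_n$, the choice $\Lambda=O(n)$, $\omega=\theta_n$, $m=p=n$ fits the framework of Theorem \ref{level1}.

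Next I would identify the fibres. For $u\in\R^n$ write $z=\sqrt2\,u$. A pair $(x,y)\in A\times B$ lies in $P_g^{-1}\{u\}$ iff $x=g(y)+z$, so $P_g^{-1}\{u\}\cap(A\times B)$ is the graph $\{(g(y)+z,y):y\in B,\ g(y)+z\in A\}$. The map $y\mapsto(g(y)+z,y)$ is bi-Lipschitz, hence
\[
\dim\bigl(P_g^{-1}\{u\}\cap(A\times B)\bigr)=\dim\{y\in B:g(y)+z\in A\}=\dim\bigl(A\cap(g(B)+z)\bigr),
\]
the last equality because $g$ is an isometry and translations preserve Hausdorff dimension. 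Taking $u=P_g(x,y)$, i.e. $z=x-g(y)$, turns $g(B)+z$ into $g(B-y)+x$, so \eqref{pr1} becomes \eqref{e5}; and since $u\mapsto\sqrt2\,u$ preserves positivity of $\mathcal L^n$, \eqref{pr2} becomes \eqref{e6}. Thus it remains only to check the hypotheses of Theorem \ref{level1}.

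The condition $s+t>n$ is immediate from $s+(n-1)t/n>n$, because $(n-1)/n<1$. For the quantitative bound \eqref{L2} I would argue by Plancherel. If $\mu,\nu$ are Frostman measures of exponents $s,t$ supported in the closed unit balls, then $\widehat{P_{g\sharp}(\mu\times\nu)}(\xi)=\widehat\mu(\xi/\sqrt2)\,\widehat\nu(-g^{-1}\xi/\sqrt2)$, whence, the sign being immaterial after taking moduli,
\[
\int D\bigl(P_{g\sharp}(\mu\times\nu),u\bigr)^2\,d\mathcal L^nu=\int_{\R^n}\bigl|\widehat\mu(\xi/\sqrt2)\bigr|^2\,\bigl|\widehat\nu(g^{-1}\xi/\sqrt2)\bigr|^2\,d\xi .
\]
Integrating in $g$ and using the rotation invariance of $\theta_n$, the inner integral over $O(n)$ becomes, up to a constant, the spherical average $\int_{S^{n-1}}|\widehat\nu(|\xi|\omega/\sqrt2)|^2\,d\sigma^{n-1}\omega$, which is independent of the direction of $\xi$.

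The decisive input is then the spherical average estimate $\int_{S^{n-1}}|\widehat\nu(R\omega)|^2\,d\sigma^{n-1}\omega\lesssim R^{-(n-1)t/n}$ for $R\geq1$, valid for Frostman measures $\nu$ of exponent $t$ with a constant depending only on $n,t$ and the Frostman constant; this is the harmonic-analytic heart of the matter and is exactly the estimate underlying the results of \cite{M4}. Granting it, the triple integral is $\lesssim\int_{\R^n}|\widehat\mu(\xi)|^2\min\{1,|\xi|^{-(n-1)t/n}\}\,d\xi$. Here I would choose $s'$ with $n-(n-1)t/n<s'<s$, possible precisely because $s+(n-1)t/n>n$; then $|\xi|^{-(n-1)t/n}\leq|\xi|^{s'-n}$ for $|\xi|\geq1$, and the remaining integral is dominated by $I_{s'}(\mu)$, which is uniformly bounded over Frostman-$s$ measures on the unit ball by the standard energy estimate following \eqref{eq10}. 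This gives \eqref{L2} with a constant $C$ independent of $\mu,\nu$, and in particular $P_{g\sharp}(\mu\times\nu)\ll\mathcal L^n$ for $\theta_n$ almost all $g$. Theorem \ref{level1} then applies and, through the identification above, yields \eqref{e5} and \eqref{e6}. I expect the spherical average bound to be the only substantial step; the reduction to it is routine, the one point requiring care being to keep every constant uniform over the Frostman measures.
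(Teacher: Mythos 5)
Your proposal is correct and follows essentially the same route as the paper: apply Theorem \ref{level1} to the (normalized) projections $P_g(x,y)=x-g(y)$, verify \eqref{L2} via Plancherel and the spherical average decay $\sigma(\nu)(r)\lesssim r^{-(n-1)t/n}$, reduce to a uniformly bounded energy $I_{s'}(\mu)$ with $n-(n-1)t/n<s'<s$ (the paper equivalently lowers $t$ to $t'<t$), and identify the fibres with $A\cap(g(B-y)+x)$ by a bi-Lipschitz map. The one slip is attribution: the decisive bound with exponent $(n-1)t/n$ is the Du--Zhang estimate \cite{DZ} (with Wolff \cite{W} as a precursor), not the weaker spherical average estimates of \cite{M4}; as the paper notes, one must also check from the proofs in \cite{DZ} that the implicit constant is uniform over Frostman-$t$ measures on the unit ball, which is exactly the uniformity your argument requires.
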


\begin{proof}

We apply Theorem \ref{level1} with $P_g:\R^{2n}\to\R^n, P_g(x,y)=x-g(y), x,y\in\Rn, g\in O(n)$. The validity of its assumptions follows from the proof of Theorem 4.2 in \cite{M8}, but I give the short argument here. It is based on the estimates of Wolff \cite{W} and Du and Zhang \cite{DZ} on quadratic spherical averages of the Fourier transform.

Let $\mu, \nu\in\mathcal M(B^n(0,1)),$ with $\mu(B(x,r))\leq r^{s}, \nu(B(x,r))\leq r^{t}$ for $x\in\R^n, r>0$.  Set for $r>1$,

$$\sigma(\nu)(r)=\int_{S^{n-1}}|\widehat{\nu}(rv)|^2\,d\sigma^{n-1}v.$$ 
Let $0<t'<t$ with $s+(n-1)t'/n>n$. Then by \cite{DZ},
\begin{equation}\label{dz}
\sigma(\nu)(r) \lesssim r^{-(n-1)t'/n}.
\end{equation} 
To apply Theorem \ref{level1} we need that the implicit constant here is independent of $\nu$ as long as $\nu\in\mathcal M(B^n(0,1))$ and $\nu(B(x,r))\leq r^{t}$ for $x\in\R^n, r>0$. 
It is not stated in \cite{DZ}, but it can be checked from the proofs.

As   $\widehat{P_{g\sharp}(\mu\times\nu)}(\xi)=\widehat{\mu}(\xi)\widehat{\nu}(-g^{-1}(\xi))$ we have 
\begin{equation}\label{e7}
\begin{split}
&\iint|\widehat{P_{g\sharp}(\mu\times\nu)}(\xi)|^2\,d\xi\,d\theta_n g
=c\int\sigma(\nu)(|\xi|)|\widehat{\mu}(\xi)|^2\,d\xi\\
&\lesssim \mathcal L^n(B(0,1)) + \int_{|\xi|>1}|\widehat{\mu}(\xi)|^2|\xi|^{-(n-1)t'/n}\,d\xi\\
&=\mathcal L^n(B(0,1)) +c'I_{n-(n-1)t'/n}(\mu)\leq  C(n,s,t')<\infty.
\end{split}
\end{equation}
since $n-(n-1)t'/n<s$.  

We can now apply \eqref{pr1}
 of Theorem \ref{level1}. It gives $\dim P_g^{-1}\{P_g(x,y)\}\cap (A\times B) =s+t-n$ for $\mathcal H^s\times\mathcal H^t\times\theta_n$ almost all $(x,y,g)\in A\times B\times O(n)$. Notice that $(u,v)\in P_g^{-1}\{P_g(x,y)\}\cap (A\times B)$ if and only if $u\in A, v\in B$ and $u=g(v-y)+x$, that is,
$$A\cap (g(B-y)+x)=\Pi(P_g^{-1}\{P_g(x,y)\}\cap (A\times B)),$$
where the projection $\Pi(x,y)=x$ is a constant times isometry on any $n$-plane $\{(u,v):u=g(v)+w\}$.
Hence \eqref{e5} follows. In the same way \eqref{e6} follows from \eqref{pr2} of Theorem \ref{level1}.
\end{proof}

In \cite{M8} \eqref{e7} was proven also for other measures on $O(n)$ in place of $\theta_n$ yielding dimension estimates for exceptional subsets of $O(n)$. Combining this with Theorem \ref{level1} we obtain with the same proof as above:

\begin{thm}
Suppose the assumptions of Theorem \ref{inter} are valid. Let $E$ be the set of $g\in O(n)$ for which one of the conclusions \eqref{e5} or \eqref{e6} fails. Then $\dim E \leq 2n-1-s - (n-1)t/n +(n-1)(n-2)/2.$
\end{thm}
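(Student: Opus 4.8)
The plan is to rerun the proof of Theorem~\ref{inter} with the Haar measure $\theta_n$ replaced by a Frostman measure carried by the exceptional set, the only genuinely new ingredient being a weighted version of the spherical-average bound \eqref{dz}. I would argue by contradiction on the dimension. Set $\beta_0 = 2n-1-s-(n-1)t/n+(n-1)(n-2)/2$ and suppose $\dim E > \beta_0$; choose $\beta$ with $\dim E > \beta > \beta_0$. Passing if necessary to a Borel hull of $E$ and then to a compact subset, by \eqref{eq3} there is a nonzero $\omega\in\mathcal M(O(n))$ with $\spt\omega\subset E$ and $\omega(B(g,r))\le r^\beta$ for all $g,r$. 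It then suffices to prove that \eqref{e5} and \eqref{e6} hold for $\omega$-almost every $g$, since this forces $\omega(\spt\omega)=0$, a contradiction, and hence $\dim E\le\beta_0$ by the arbitrariness of $\beta$.

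To produce the $\omega$-almost-everywhere statement I would apply Theorem~\ref{level1} with $\Lambda=\spt\omega$, the measure $\omega$, and $P_g(x,y)=x-g(y)$, exactly as in Theorem~\ref{inter}; the translation of \eqref{pr1} and \eqref{pr2} into \eqref{e5} and \eqref{e6} through the projection $\Pi(x,y)=x$ is then verbatim. Thus the whole matter reduces to verifying the quantitative $L^2$-bound of Theorem~\ref{level1} for this $\omega$. Following \eqref{e7} and using $\widehat{P_{g\sharp}(\mu\times\nu)}(\xi)=\widehat\mu(\xi)\widehat\nu(-g^{-1}\xi)$, I would reduce it to controlling, for $|\xi|>1$, the weighted average $\sigma_\omega(\nu)(\xi):=\int|\widehat\nu(-g^{-1}\xi)|^2\,d\omega g$. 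The map $g\mapsto g^{-1}(\xi/|\xi|)$ fibres $O(n)$ over $S^{n-1}$ with fibres cosets of $O(n-1)$, of dimension $(n-1)(n-2)/2$, so the pushforward of $\omega$ to the sphere satisfies a Frostman bound with exponent $\gamma=\beta-(n-1)(n-2)/2$, uniformly in the direction $\xi/|\xi|$. The estimate of \cite{M8}, resting on \cite{W} and \cite{DZ}, then gives $\sigma_\omega(\nu)(\xi)\lesssim|\xi|^{-\kappa}$ with $\kappa=(n-1)t'/n-\big((n-1)-\gamma\big)$ for $0<t'<t$, the loss $(n-1)-\gamma$ being precisely the codimension in $S^{n-1}$ of the image of $\spt\omega$.

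Inserting this into the computation of \eqref{e7} yields
\[
\iint|\widehat{P_{g\sharp}(\mu\times\nu)}(\xi)|^2\,d\xi\,d\omega g\ \lesssim\ \mathcal L^n(B(0,1))+I_{n-\kappa}(\mu),
\]
which is finite exactly when $n-\kappa<s$, i.e. when $\beta>2n-1-s-(n-1)t'/n+(n-1)(n-2)/2$; letting $t'\uparrow t$ this is our standing assumption $\beta>\beta_0$. Hence the hypotheses of Theorem~\ref{level1} hold for $\omega$ and the argument closes as above.

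The main obstacle is the weighted spherical-average estimate with the sharp loss $(n-1)-\gamma$ together with the uniformity of its implicit constant over all admissible $\nu$ and all fibre-projections of $\omega$; this is exactly the content attributed to \cite{M8}, so up to reading the uniform constant out of those proofs --- the same caveat already flagged after \eqref{dz} --- it is available. A secondary, routine point is the measurability packaging needed to pass from ``$\omega(E)=0$ for every such $\omega$'' to the dimension bound, which is handled by Frostman's lemma applied to a Borel hull of $E$.
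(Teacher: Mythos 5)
Your proposal is correct and follows essentially the same route as the paper, which disposes of this theorem in one sentence by citing \cite{M8} for the validity of \eqref{e7} with Frostman measures $\omega$ on $O(n)$ in place of $\theta_n$ and then feeding that into Theorem \ref{level1}; your reconstruction of the weighted spherical-average bound, the codimension loss $(n-1)(n-2)/2$ from the $O(n-1)$-fibres, and the resulting arithmetic all match the stated exponent. One small repair: a compact subset of a Borel hull of $E$ need not lie in $E$, so you should instead note that $E$ is itself Borel (by the measurability framework set up in Section \ref{dimension}) and apply Frostman's lemma directly to a compact subset of $E$ of positive $\mathcal H^{\beta}$ measure.
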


The assumptions of Theorem \ref{level} are known to hold in many cases, consequently we obtain dimension formulas for the corresponding plane sections.  For example, $(x,y)\mapsto x-ty, x,y\in\Rn, t\in\R,$ is a special case of projections considered by Oberlin in \cite{O} and $(x,y)\mapsto x-g(y), x,y\in\Rn, g\in O(n),$ was studied in \cite{M8}. However, it seems that Orponen's methods from \cite{Or} yield the same results and without any lower density assumptions.

\vspace{1cm}
\begin{footnotesize}
{\sc Department of Mathematics and Statistics,
P.O. Box 68,  FI-00014 University of Helsinki, Finland,}\\
\emph{E-mail address:} 
\verb"pertti.mattila@helsinki.fi" 

\end{footnotesize}

\end{document}